\theoremstyle{plain} 
\newtheorem{theorem}{Theorem}[section]
\newtheorem*{Theorem B}{Theorem B}
\newtheorem*{Theorem A}{Theorem A}
\theoremstyle{remark} 
\newtheorem{remark}{Remark}[section]
\theoremstyle{remark}
\theoremstyle{definition}
\numberwithin{equation}{section}
\def\<{\left < }
\def\>{\right >}
\def\({\left ( }
\def\){\right )}
\def\e{\eqref}
\def\de{\delta}
\def\o{\omega}
\def\a{\alpha}
\def\b{\beta}
\def\p{\partial}
\def\sech{{\rm sech}\hskip.002in }
\begin{document}

\title[Ideal submanifolds with type number $\leq 2$] {Classification of ideal submanifolds of real space forms with type number $\leq 2$}

\author{Bang-Yen Chen and Handan Yildirim}

 \address{Department of Mathematics\\Michigan State University\\ East Lansing, MI 48824--1027, USA}

\email{bychen@math.msu.edu}

 \address{Department of Mathematics, Faculty of Science\\ Istanbul University\\ 34134 Vezneciler \\Istanbul, Turkey}
 \email{handanyildirim@istanbul.edu.tr}
\begin{abstract} Roughly speaking, an ideal immersion of a Riemannian manifold into a real space form is an isometric immersion which produces the least possible amount of tension from the ambient space at each point of the submanifold. The main purpose of this paper is to completely classify all non-minimal ideal submanifolds of real space forms with type number $\leq 2$.
 \end{abstract}

\keywords{Ideal submanifold; optimal inequalities; $\delta$-invariants.}

 \subjclass[2000]{Primary: 53C42;  Secondary  53C40}

\maketitle

\section{Introduction.}  

Riemannian invariants play the most fundamental role in Riemannian geometry. They provide the intrinsic characteristics of Riemannian manifolds; moreover, they affect the behavior of  Riemannian manifolds in general.  Classically, among Riemannian curvature invariants people have studied sectional, Ricci and scalar  curvatures intensively since B. Riemann. 

Let $R^m(c)$ denote an $m$-dimensional real space form of constant sectional curvature $c$.
Given integers $n\geq 3$ and $k\geq 1$, let ${\mathcal S}(n,k)$ be the set  consisting of all unordered $k$-tuples $(n_1,\ldots,n_k)$ of  integers $\geq 2$ such that  $n_1< n$ and
$n_1+\cdots+n_k\leq n$. 
For each $(n_1,\ldots,n_k)\in {\mathcal S}(n,k)$, B.-Y. Chen introduced in \cite{c98,c00} a new type of curvature invariants, denoted by $\delta(n_1,\ldots,n_k)$ (see \cite{book} for details).

In \cite{c98,c00}, Chen proved that for every $n$-dimensional submanifold $M^n$ of $R^m(c)$ the invariant
$\delta(n_1,\ldots,n_k)$ and the squared mean curvature $H^2$ of $M^n$ satisfy the following optimal fundamental inequality:
\begin{align}\label{1.1}\delta{(n_1,\ldots,n_k)}\leq  c(n_1,\ldots,n_k)H^2+b(n_1,\ldots,n_k)
c,\end{align} where  $c(n_1,\ldots,n_k)$ and $b(n_1,\ldots,n_k)$
are positive constants defined by
\begin{align} &c(n_1,\ldots,n_k)= {{n^2(n+k-1-\sum n_j)}\over{2(n+k-\sum
n_j)}},\label{1.2}\\& b(n_1,\ldots,n_k)={{n(n-1)}\over2} -\sum_{j=1}^k
\frac{{n_j(n_j-1)}}{2}.\label{1.3}\end{align}

Many applications of the invariants $\delta(n_1,\ldots,n_k)$ and of the inequality \e{1.1} have been obtained during the last two decades (cf. \cite{c00,c08,book,c09} for details). For instance, it was shown that the $\delta$-invariants give rise to new obstructions to minimal, Lagrangian and slant immersions. It was also shown that these invariants relate closely with the first nonzero eigenvalue $\lambda_1$ of the Laplacian $\Delta$ on $M^n$. Moreover, they provide an optimal lower bound of  $\lambda_1$ for compact irreducible homogeneous spaces which improves a well-known result of T. Nagano \cite{N}. Furthermore, $\delta$-invariants have been applied in \cite{HV} by S. Haesen and L. Verstraelen to the theory of general relativity.

An isometric immersion of a Riemannian $n$-manifold $M^n$ into a real space form $R^m(c)$  is called {\it ideal\/} if it satisfies the equality case of inequality \e{1.1} identically for some $k$-tuple $(n_1,\ldots,n_k)$. Roughly speaking, an ideal immersion of a Riemannian manifold into a real space form is an isometric immersion which produces the least possible amount of tension from the ambient space at each point of the submanifold (see \cite{c00} or \cite[page 269]{book}). Since \e{1.1} is a very general and sharp inequality, it is a very natural and interesting problem to investigate  submanifolds which verify  the equality case of this inequality, i.e., to determine ideal immersions.

Recall that a hypersurface $M^{n}$ of a real space form $R^{n+1}(c)$ is said to have type number $\leq r$ if the shape operator  at each point $p\in M^{n}$ has at most $r$ nonzero eigenvalues. In general, a submanifold $M^{n}$ of $R^{m}(c)$ is said to have {\it type number} $\leq r$ if, at each point $p\in M^{n}$ and for each unit normal vector $\xi\in T^{\perp}_{p}M^{n}$, the shape operator $A_{\xi}$ has at most $r$ nonzero eigenvalues.

Since the invention of $\delta$-invariants in early 1990s,  $\delta$-invariants and the inequalities related to  these invariants have become a vibrant research subject in differential geometry. Many interesting results in this respect were obtained by many geometers (see, for instance, \cite{A,B,c93,c98,c00,c08,book,CM,Dillen1,Dillen2,DV,MO,S,V}). On the other hand, the classification of ideal submanifolds in space forms remains a very challenging problem. 
The purpose of this paper is thus to classify ideal submanifolds of real space forms with type number $\leq 2$.

\section{Preliminaries.}
Since $R^{m}(c)$ is of constant sectional curvature $c$, the Riemann curvature tensor $\tilde R$ of $R^m(c)$ satisfies
$$\tilde R(X,Y)Z=c\{\<Y,Z\>X-\<X,Z\>Y\}.$$
Assume that $\phi: M^{n}\to R^{m}(c)$ is an isometric immersion of an $n$-dimensional Riemannian manifold $M^n$ into $R^{m}(c)$. Denote by $\nabla$ and $\tilde\nabla$ the Levi-Civita connections on $M^n$ and $R^m(c)$, respectively. 

Let $X$ and $Y$ be vector fields tangent to $M^{n}$ and let $\zeta$ be a normal vector field of $M^{n}$. Then the formulas of Gauss and Weingarten give the following decomposition of the
vector fields $\tilde\nabla_XY$ and $\tilde\nabla_X\zeta$ into a tangent and a normal component:
\begin{align} &\label{2.1}\tilde \nabla_XY=\nabla_XY+h(X,Y), \\&
\tilde \nabla_X \zeta =-A_\zeta X+D_X\zeta.\end{align} These formulas define $h$, $A$ and $D$ which are called the second fundamental form, the shape operator and the normal connection, respectively.  For each $\xi$, $A_{\xi}$ is a symmetric endomorphism. 

The shape operator and the second fundamental form are related by
\begin{align}\label{2.3} \<h(X,Y),\xi\>=\<A_{\xi}X,Y\>.\end{align}
 The mean curvature vector field is defined by $H=\frac{1}{n}{\rm trace}\, h$.

The equations of Gauss, Codazzi and Ricci are given respectively by
\begin{align}\label{2.4} & \<R(X,Y)Z,W\> =  \<A_{h(Y,Z)} X,W\> - \<A_{h(X,Z)}Y,W\> \\ & \hskip.5in \notag+c\,(\<X,W\>\<Y,Z\>-\<X,Z\>\<Y,W\>),
\\ & \label{2.5} (\overline\nabla_X h)(Y,Z) = (\overline\nabla_Y h)(X,Z),\\
 & \label{2.5.1} \<R^{\perp}(X,Y)\xi,\eta\> = \<[A_{\xi},A_{\eta}]X,Y\>
\end{align}
for $X,Y,Z,W$ tangent to $M^{n}$ and $\xi,\eta$ normal  to $M^{n}$, where $R$ is the curvature tensor of $M^{n}$ and $\overline\nabla h$ is defined by
\begin{equation}\begin{aligned}\label{2.6}(\overline\nabla_X h)(Y,Z) = D_X h(Y,Z) - h(\nabla_X Y,Z) - h(Y,\nabla_X
Z).\end{aligned}\end{equation}

A submanifold $M^{n}$  is said to be {\it totally geodesic} if $h=0$ holds identically. It  is called {\it totally umbilical} if its second fundamental form satisfies
\begin{align}h(X,Y)=\<X,Y\>H.\end{align}
A totally umbilical submanifold is called an extrinsic sphere if its mean curvature vector field is a parallel normal vector field, i.e., $DH=0$ holds identically.

\section{$\delta$-invariants, inequalities and ideal immersions}

Let $M^n$ be a Riemannian  $n$-manifold. For a plane section $\pi\subset T_pM^n$, $p\in M^n$, let $K(\pi)$ be the sectional curvature of $M^n$ associated with $\pi$. For an orthonormal basis $\{e_1,\ldots,e_n\}$ of $T_pM^n$, the scalar curvature $\tau$ at $p$ is
defined by
\begin{align}\tau(p)=\sum_{i<j} K(e_i\wedge e_j). \end{align}

Let $L$ be a subspace of $T_pM^n$  with dimension $r\geq 2$  and let $\{e_1,\ldots,e_r\}$ be an orthonormal basis of $L$. The scalar curvature $\tau(L)$ of $L$ is defined by
\begin{align}\tau(L)=\sum_{\alpha<\beta} K(e_\alpha\wedge e_\beta),\quad 1\leq \alpha,\beta\leq r.\end{align}

As before, for given integers $n\geq 3$ and  $k\geq 1$, we denote by $\mathcal S(n,k)$ the finite set  consisting of all $k$-tuples $(n_1,\ldots,n_k)$ of integers  satisfying  $$2\leq n_1,\cdots,
n_k<n\;\; {\rm and}\;\; n_1+\cdots+n_k\leq n.$$ Moreover, we denote by ${\mathcal S}(n)$ the union $\cup_{k\geq 1}\mathcal S(n,k)$.

For each $(n_1,\ldots,n_k)\in \mathcal S(n)$ and each $p\in M^n$, the invariant $\de{(n_1,\ldots,n_k)}(p)$ is defined  by
\begin{align} \delta(n_1,\ldots,n_k)(p)=\tau(p)- \inf\{\tau(L_1)+\cdots+\tau(L_k)\},\end{align} where $L_1,\ldots,L_k$ run over all $k$ mutually orthogonal subspaces of $T_pM^n$ such that  $\dim L_j=n_j,\, j=1,\ldots,k$.

Chen proved in \cite{c98,c00} the following sharp general relation between $\delta{(n_1,\ldots,n_k)}$ and the squared mean curvature $H^2$ for  submanifolds in real space forms.

\begin{Theorem A} Let $M^n$ be an $n$-dimensional submanifold of a real space form $R^m(c)$ of constant  sectional curvature $c$. Then, for each $k$-tuple  $(n_1,\ldots,n_k)\in\mathcal S(n)$, we have
\begin{align}\label{3.4} \delta(n_1,\ldots,n_k) \leq  {{n^2(n+k-1-\sum n_j)}\over{2(n+k-\sum n_j)}}H^2 +{1\over2} \Big({{n(n-1)}}-\sum_{j=1}^k {{n_j(n_j-1)}}\Big)c.\end{align}

The equality case of inequality \eqref{3.4}  holds at a point $p\in M^n$ if and only if there exists an orthonormal basis  $\{e_1,\ldots,e_m\}$ at $p$ such that  the shape operators of $M^n$ in $R^m(c)$ at $p$  with respect to $\{e_1,\ldots,e_m\}$  take the following form:
\begin{align} \font\b=cmr10 scaled \magstep2
\def\bigzerol{\smash{\hbox{ 0}}}
\def\bigzerou{\smash{\lower.0ex\hbox{\b 0}}} A_r=\left[ \begin{matrix} A^r_{1} & \hdots & 0
\\ \vdots  & \ddots& \vdots &\bigzerou \\ 0 &\hdots &A^r_k&
\\ \\&\bigzerou & &\mu_rI \end{matrix} \right],\quad  r=n+1,\ldots,m,
\label{3.5}\end{align}
where $I$ is an identity matrix and $A^r_j$ is a symmetric $n_j\times n_j$  submatrix satisfying
\begin{align}\label{3.6}\hbox{\rm trace}\,(A^r_1)=\cdots=\hbox{\rm
trace}\,(A^r_k)=\mu_r.\end{align}\end{Theorem A}

An isometric immersion  of a Riemannian $n$-manifold $M^{n}$ into a real space form $R^{m}(c)$ is called {\it $\delta(n_1,\ldots,n_k)$-ideal\/} if it satisfies the equality case of inequality \e{3.4} identically. An isometric immersion of $M^{n}$ into $R^{m}(c)$ is called {\it ideal} if it is a $\delta(n_{1},\ldots,n_{k})$-ideal for some $(n_{1},\ldots,n_{k})\in {\mathcal S}(n)$.

 \section{Ideal submanifolds with type number $\leq 2$ in $\mathbb E^{m}$.}
 
In this section, we classify  ideal submanifolds of $\mathbb E^{m}$ with type number $\leq 2$.

\begin{theorem}\label{T:1} Let $M^{n}$ be an ideal submanifold of $\mathbb E^{m}$. If the type number of $M^{n}$ in $\mathbb E^{m}$ is  $\leq 2$, then either $M^{n}$ is a minimal submanifold or the immersion of $M^{n}$ in $\mathbb E^{m}$ is congruent to
\begin{equation}\begin{aligned} \label{4.1} &\text{\small$\Bigg($}\! \sqrt{1-a^{2}}\hskip.006in x_{1},x_{2},\ldots,x_{n-2},a x_{1}\sin x_{n-1}, ax_{1}\cos x_{n-1}\sin x_{n},\\& \hskip1.0in  a x_{1}\cos x_{n-1}\cos x_{n},0,\ldots,0 \text{\small$\Bigg)$}\end{aligned}\end{equation}
for some real number $a$ satisfying $0<a< 1$.
\end{theorem}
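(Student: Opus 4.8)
The plan is to start from the algebraic normal form \eqref{3.5}--\eqref{3.6} guaranteed by Theorem A, specialize it using the hypothesis that the type number is $\leq 2$, and then feed the resulting very restricted second fundamental form into the Gauss and Codazzi equations to integrate the immersion. First I would argue that if $M^n$ is not minimal, then the $\delta(n_1,\ldots,n_k)$-ideal condition together with type number $\leq 2$ forces $k=1$ and $n_1=n-1$: indeed, for each normal direction $r$ the block matrix $A_r$ in \eqref{3.5} must have rank $\leq 2$, and the trace constraints \eqref{3.6} say each block $A^r_j$ has the same trace $\mu_r$ as the scalar block $\mu_r I$ of size $n+k-\sum n_j$. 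If that scalar block is nontrivial (dimension $\geq 1$) and $\mu_r\neq 0$, then $A_r$ already has at least $n+k-\sum n_j\geq 2$ equal nonzero eigenvalues plus contributions from the $A^r_j$'s, and rank $\leq 2$ collapses everything unless $k=1$, $\sum n_j = n-1$, and each $A^r_j$ has rank $\leq 1$. The case where all $\mu_r=0$ would force $H=0$, i.e. minimality, which is the excluded alternative. So the non-minimal case reduces to a $\delta(n-1)$-ideal submanifold whose shape operators all have rank $1$, hence a single normal direction suffices and $M^n$ sits essentially in an $\mathbb E^{n+1}$.

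Next I would exploit the rank-one shape operator. Writing $A\equiv A_{n+1}$ in the adapted frame, \eqref{3.5} with $k=1$, $n_1=n-1$ gives $A$ diagonal with $n-1$ entries summing to $\mu$ and one entry equal to $\mu$, and rank $\leq 2$ with in fact (from the reduction above) the $(n-1)$-block of rank $\leq 1$; so at a generic point $A$ has exactly the eigenvalues $\{\lambda, 0,\ldots,0,\lambda\}$ on a suitable refinement of the frame, i.e. $A$ has a single nonzero eigenvalue of multiplicity $2$ or a two-dimensional image spanned by two unit eigenvectors with eigenvalues summing appropriately; I would pin down the exact eigenvalue structure by combining the trace condition \eqref{3.6} with rank $\leq 2$. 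Then the Gauss equation \eqref{2.4} with $c=0$ shows the curvature tensor is supported on the image of $A$, so $M^n$ is intrinsically a product of a flat factor $\mathbb E^{n-2}$ (the kernel directions, which one checks are parallel using Codazzi \eqref{2.5}) with a surface. The Codazzi equation \eqref{2.5}--\eqref{2.6} applied to the distinguished eigenvector directions yields ODEs for how $\lambda$ varies; integrating the flat distribution gives the linear coordinates $x_2,\ldots,x_{n-2}$ and the first slot $\sqrt{1-a^2}\,x_1$, while the surface factor, being a rotation-type surface of the form $x_1\mapsto$ (a curve) because of the $x_1$-scaling visible in \eqref{4.1}, is integrated from the remaining Codazzi/Gauss data to produce the $ax_1(\sin x_{n-1},\cos x_{n-1}\sin x_n,\cos x_{n-1}\cos x_n)$ block, with the constant $a\in(0,1)$ emerging as the ratio fixing the pitch of the cone-like surface.

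I expect the main obstacle to be the integration step: going from the pointwise normal form of the shape operator to the explicit parametrization \eqref{4.1}. The algebraic reduction to $k=1$, $n_1=n-1$, rank one is bookkeeping with \eqref{3.5}--\eqref{3.6}; the real work is showing that the distribution decomposition is a genuine Riemannian product with the stated flat and non-flat pieces (this needs Codazzi to control $\nabla$ on eigenspaces and de Rham / a direct frame computation), and then identifying the non-flat $2$-dimensional piece precisely. I would handle the surface by choosing coordinates in which the rank-one second fundamental form is $\lambda\,dx_1^2$ type after a change of frame, deriving that $\lambda$ must be (up to constants) $1/x_1$ from the Codazzi equations, and recognizing the resulting surface as a piece of a cone, which when embedded with the extra flat directions and the scaling constant $\sqrt{1-a^2}$ versus $a$ gives exactly \eqref{4.1}; a congruence argument (rigidity in $\mathbb E^m$ via the fundamental theorem of submanifolds) then upgrades ``the immersion satisfies these structure equations'' to ``the immersion is congruent to \eqref{4.1}.'' A secondary subtlety is dealing with points where the rank of $A$ drops (where $\lambda=0$): one argues these form a closed set with empty interior unless $M^n$ is totally geodesic, which is the minimal alternative, so on the open dense complement the above analysis applies and then extends by continuity.
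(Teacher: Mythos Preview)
Your broad strategy agrees with the paper's: use Theorem~A to pin down the shape operator as having a single nonzero eigenvalue $\lambda$ of multiplicity two, then exploit Codazzi and Gauss to integrate the immersion, and finally reduce higher codimension to the hypersurface case via parallelism of the first normal bundle. The genuine gap is in the decomposition step. You assert that $M^n$ is intrinsically a \emph{Riemannian product} $\mathbb{E}^{n-2}\times\Sigma^2$ and propose de~Rham to prove it. This is false. Codazzi does give that the kernel distribution $\mathcal{D}_1=\ker A$ is totally geodesic, but the image distribution $\mathcal{D}_2=\mathrm{Span}\{e_{n-1},e_n\}$ is only totally \emph{umbilical} in $M^n$: from $(\overline\nabla_{e_\alpha}h)(e_{n-1},e_{n-1})=(\overline\nabla_{e_{n-1}}h)(e_\alpha,e_{n-1})$ one obtains $e_\alpha\lambda=\lambda\,\omega^\alpha_{n-1}(e_{n-1})$, and since $\lambda$ turns out to depend on the base variable $x_1$, this connection coefficient is nonzero, so $\mathcal{D}_2$ is not parallel and de~Rham does not apply. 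The correct tool is Hiepko's theorem: $\mathcal{D}_1$ totally geodesic and $\mathcal{D}_2$ spherical imply that $M^n$ is locally a \emph{warped product} $L_1\times_f L_2$ with $L_1$ an open piece of $\mathbb{E}^{n-2}$, $L_2$ an extrinsic $2$-sphere, and a genuinely nonconstant warping function $f$. The determination of $f$ then comes from the \emph{Gauss} equation rather than Codazzi: vanishing of $\<R(\partial_i,\partial_{n-1})\partial_{n-1},\partial_j\>$ forces $f_{ij}=0$ for all $i,j\leq n-2$, so $f$ is affine and after a rotation $f=ax_1$; Gauss on the $\mathcal{D}_2$-plane then gives $\lambda=\sqrt{1-a^2}/(ax_1)$ with $0<a<1$, and the PDE system from the Gauss and Weingarten formulas integrates to~\eqref{4.1}.

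Two smaller points. First, the second fundamental form has rank~$2$ and is supported on $e_{n-1},e_n$; your phrase ``rank-one second fundamental form $\lambda\,dx_1^2$'' mislocates both the rank and the direction (the $x_1$-dependence enters only through the scalar $\lambda$). Second, your case analysis on the admissible $k$-tuple overlooks $k=2$ with $n_1+n_2=n$ (no scalar block), which is in fact the case the paper invokes; it yields the same eigenvalue pattern, so the conclusion survives, but your argument as written is incomplete there.
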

\begin{proof} First, let us assume that $M^{n}$ is a non-minimal, ideal hypersurface in $\mathbb E^{n+1}$. If  $M^{n}$ has type number $\leq 2$, then it follows from  \e{3.5}-\e{3.6} in Theorem A that $M^{n}$ is $(n_{1},n-n_{1})$-ideal for some integer $n_1\in [2,n-1]$. Thus, the shape operator $A_{e_{n+1}}$ of $M^{n}$ with respect to a unit normal vector $e_{n+1}$ has exactly one nonzero eigenvalue, say $\lambda$,  with multiplicity two. Therefore, there exists a local orthonormal frame $\{e_{1},\ldots,e_{n}\}$ of the tangent bundle of $M^{n}$ such that the second fundamental form satisfies 
\begin{equation}\begin{aligned}\label{4.2} &h(e_{n-1},e_{n-1})=h(e_{n},e_{n})=\lambda e_{n+1},\;\;  h(e_{i},e_{j})=0,\;\; otherwise.\end{aligned}\end{equation}

Let us put $\nabla_{X}e_{i}=\sum_{j=1}^{n}\o_{i}^{j}(X),\, i=1,\ldots,n.$
Then, after applying Codazzi's equation, we find
\begin{align}\label{4.4} &e_{\a}\lambda=\lambda \o_{n-1}^{\a}(e_{n-1})=\lambda \o_{n}^{\a}(e_{n}),\;\; e_{n-1}\lambda=e_{n}\lambda=0,\;\;  
\\&\label{4.5} \o_{n-1}^{\a}(e_{n})=\o_{n}^{\a}(e_{n-1})=0,
\;\; \o_{\alpha}^{n-1}(e_{\b})=\o_{\a}^{n}(e_{\b})=0,\end{align}
for $\a,\b=1,\ldots,n-2.$

Let us define distributions  $\mathcal D_{1}$ and $\mathcal D_{2}$ by
\begin{align}\label{4.7} \mathcal D_{1}={\rm Span}\{e_{1},\ldots, e_{n-2}\},\;\; \mathcal D_{2}={\rm Span}\{e_{n-1},e_{n}\}.\end{align}
It follows from \e{4.4} and \e{4.5} that $\mathcal D_{1}$ and  $\mathcal D_{2}$ are integrable distributions such that leaves of $\mathcal D_{1}$ are totally geodesic and leaves of $\mathcal D_{2}$ are totally umbilical in $M^{n}$. Furthermore,  \e{4.2} implies that leaves of $\mathcal D_{2}$ are also totally umbilical in $\mathbb E^{n+1}$. Hence, leaves of $\mathcal D_{2}$ are extrinsic spheres in $\mathbb E^{n+1}$, i.e.,  totally umbilical submanifolds with parallel mean curvature vector field. Now, it is easy to verify that each leaf of $\mathcal D_{2}$ is an extrinsic sphere in $M^{n}$. Consequently, $\mathcal D_{2}$ is a spherical distribution. So, by Hiepko's theorem (cf. \cite[page 90]{book}), $M^{n}$ is locally a warped product $L_{1}\times_{f}L_{2}$, where $L_{1}$ is a leaf of $\mathcal D_{1}$, $L_{2}$ is a leaf of $\mathcal D_{2}$ and $f$ is the warping function.
Since $L_{1}$ is totally geodesic in $M^{n}$ as well as in $\mathbb E^{n+1}$ by \e{4.2}, $L_{1}$ is an open portion of $\mathbb E^{n-2}$. Similarly, since $L_{2}$ is an extrinsic sphere of $\mathbb E^{n+1}$, $L_{2}$ is an open part of a  2-sphere. Thus, without loss of generality, we may assume that the warped product metric of $L_{1}\times_{f}L_{2}$ takes the following form:
\begin{align}\label{4.8} g=\sum_{i=1}^{n-2}dx_{i}^{2}+f^{2}(x_{1},\ldots,x_{n-2})(dx_{n-1}^{2}+\cos^{2}x_{n-1}dx^{2}_{n}).\end{align}
It is easy to see that $\frac{\p}{\p x_{1}},\ldots,\frac{\p}{\p x_{n}}$ are parallel to $e_{1},\ldots,e_{n}$, respectively.

From \e{4.8} we know that the Levi-Civita connection of $g$ satisfies
\begin{equation}\begin{aligned} \label{4.9} &\nabla_{\p_{i}}\p_{j}=0,\;\; i,j=1,\ldots,n-2,
\\&\nabla_{\p_{i}}\p_{n-1}=\frac{f_{i}}{f}\p_{n-1},\;\; \nabla_{\p_{i}}\p_{n}=\frac{f_{i}}{f}\p_{n},
\\& \nabla_{\p_{n-1}}\p_{n-1}=-f\sum_{i=1}^{n-2} f_{i}\p_{i},\;\;
 \nabla_{\p_{n-1}}\p_{n}=-\tan x_{n-1}\p_{n},
\\& \nabla_{\p_{n}}\p_{n}=-f\cos^{2}x_{n-1}\sum_{i=1}^{n-2} f_{i}\p_{i}+\frac{\sin 2x_{n-1}}{2}\p_{n-1},
\end{aligned}\end{equation}
where $\p_{a}=\frac{\p}{\p x_{a}},\, a=1,\ldots,n$ and $f_{i}=\frac{\p f}{\p x_{i}},\, i=1,\ldots,n-2$.

 Gauss' equation and \e{4.2} imply that $\<R(\p_{j},\p_{n-1})\p_{n-1},\p_{j}\>=0$, $j=1,\ldots,n-2$. 
On the other hand, it follows from \e{4.9} that $\<R(\p_{j},\p_{n-1})\p_{n-1},\p_{j}\>=-f f_{jj}.$
Thus, we find $f_{jj}=0$ for $j=1,\ldots,n-2$.

Similarly, we derive from \e{4.2}, \e{4.9}, Gauss' equation and $\<R(\p_{i},\p_{n})\p_{n},\p_{j}\>=0$  $(1\leq i\ne j\leq n-2)$ that $f_{ij}=0$. So, we have $f_{ij}=0$ for $i,j=1,\ldots,n$. Therefore, we obtain
$f=\sum_{i=1}^{n-2}b_{i}x_{i}+c$  for some real numbers $b_{1},\ldots,b_{n-2},c$. Consequently, after applying a suitable rotation and translation, we have $f=a x_{1}$ for some positive number $a$. Thus, \e{4.8} and \e{4.9} become
\begin{align}\label{4.10} g=\sum_{i=1}^{n-2}dx_{i}^{2}+a^{2} x_{1}^{2}(dx_{n-1}^{2}+\cos^{2}x_{n-1}dx^{2}_{n}),\end{align}and
\begin{equation}\begin{aligned} \label{4.11} &\nabla_{\p_{i}}\p_{j}=0,\;\; i,j=1,\ldots,n-2,
\\& \nabla_{\p_{1}}\p_{n-1}=\frac{\p_{n-1}}{x_{1}},\;\; \nabla_{\p_{1}}\p_{n}=\frac{\p_{n}}{x_{1}},
\\& \nabla_{\p_{k}}\p_{n-1}= \nabla_{\p_{k}}\p_{n}=0,\;\; k=2,\ldots,n-2,
\\& \nabla_{\p_{n-1}}\p_{n-1}=-a^{2}x_{1}\p_{1},\;\; \nabla_{\p_{n-1}}\p_{n}=-\tan x_{n-1}\p_{n},
\\& \nabla_{\p_{n}}\p_{n}=-a^{2}x_{1}\cos^{2}x_{n-1}\p_{1}+\frac{\sin 2x_{n-1}}{2}\p_{n-1}.
\end{aligned}\end{equation}

From \e{4.2}, \e{4.10} and Gauss' equation we obtain
\begin{align}\label{4.12} \<R(\p_{n-1},\p_{n})\p_{n},\p_{n-1}\>=a^{4}x_{1}^{4}\lambda^{2}\cos^{2}x_{n-1}.\end{align}
 On the other hand, it follows from \e{4.10} and \e{4.11} that 
\begin{align}\label{4.13} \<R(\p_{n-1},\p_{n})\p_{n},\p_{n-1}\>=a^{2}(1-a^{2})x_{1}^{2}\cos^{2}x_{n-1}.\end{align}
By combining \e{4.12} and \e{4.13} we find $\lambda^{2}=(1-a^{2})/a^{2} x_{1}^{2}$ with $0<a<1$. Thus, without loss of generality, we may put
\begin{align}\label{4.14} \lambda=\frac{\sqrt{1-a^{2}}}{ax_{1}},\end{align}
which shows that $\lambda$ is always nonzero. Hence, $M^{n}$ is non-complete and it contains no minimal points. Moreover, \e{4.14} shows that the immersion of $M^{n}$ in $\mathbb E^{n+1}$ is rigid, since its second fundamental form is completely determined by its metric.

Let $L:M^{n}\to \mathbb E^{n+1}$ denote the immersion of $M^{n}$ in $\mathbb E^{n+1}$. If we put $L_{x_{\a}}=\frac{\p L}{\p x_{\a}}$ and $L_{x_{\a}x_{\b}}=\frac{\p^{2} L}{\p x_{\a}\p x_{\b}}$, then  \e{4.2}, \e{4.10}, \e{4.11}, \e{4.14} and Gauss' formula yield
\begin{equation}\begin{aligned} \label{4.15} &L_{x_{i}x_{j}}=0,\;\; i,j=1,\ldots,n-2,
\\& L_{x_{1}x_{n-1}}=\frac{1}{x_{1}}L_{x_{n-1}},\;\; L_{x_{1}x_{n}}=\frac{1}{x_{1}}L_{x_{n}},
\\& L_{x_{k}x_{n-1}}=L_{x_{k}x_{n}}=0,\;\; k=2,\ldots,n-2,
\\& L_{x_{n-1}x_{n-1}}=-a^{2}x_{1}L_{x_{1}}+a\sqrt{1-a^{2}}\, x_{1} e_{n+1},\;\;
\\& L_{x_{n-1}x_{n}}=-\tan x_{n-1}L_{x_{n}},
\\& L_{x_{n}x_{n}}=-a^{2}x_{1}\cos^{2}x_{n-1}L_{x_{1}}+\frac{\sin 2x_{n-1}}{2}L_{x_{n-1}}\\& \hskip1.2in +a\sqrt{1-a^{2}}\, x_{1} \cos^{2}x_{n-1}e_{n+1}.
\end{aligned}\end{equation} 
Also, from  \e{4.2}, \e{4.14} and Weingarten's formula, we find
\begin{equation}\begin{aligned} \label{4.16} &\frac{\p e_{n+1}}{\p {x_{j}}}=0,\;\;j=1,\ldots,n-2,\\&
 \frac{\p e_{n+1}}{\p {x_{n-1}}}=-\frac{\sqrt{1-a^{2}}}{ax_{1}} L_{x_{n-1}},\;\;\frac{\p e_{n+1}}{\p {x_{n}}}=-\frac{\sqrt{1-a^{2}}}{ax_{1}} L_{x_{n}}.
\end{aligned}\end{equation} 
After solving the PDE system \e{4.15}-\e{4.16}, we obtain
\begin{equation}\begin{aligned} \label{4.17} &L=\sum_{\a=1}^{n-2}c_{\a}x_{\a}   +x_{1}(c_{n-1}\sin x_{n-1}+c_{n}\cos x_{n-1}\sin x_{n}+c_{n+1}\cos x_{n-1}\cos x_{n})
\end{aligned}\end{equation}
for some vectors $c_{1},\ldots,c_{n+1}\in \mathbb E^{n+1}$. Now, by applying \e{4.10} and \e{4.17}, we may conclude that the immersion $L:M^{n}\to \mathbb E^{n+1}$ is congruent to  
\begin{equation}\begin{aligned} &\Big(\! \sqrt{1-a^{2}}\hskip.006in x_{1},x_{2},\ldots,x_{n-2},a x_{1}\sin x_{n-1},\\& \hskip.6in  ax_{1}\cos x_{n-1}\sin x_{n}, a x_{1}\cos x_{n-1}\cos x_{n}\Big).\end{aligned}\end{equation}

Now, let us assume that $M^{n}$ is non-minimal and ideal in $\mathbb E^{m}$ with $m\geq n+2$ and type number $\leq 2$. Then  it follows from Theorem A that  \e{4.2} holds too. Thus, it follows from \e{4.2} and $(\bar\nabla_{e_{\a}}h)(e_{n-1},e_{n-1})=(\bar\nabla_{e_{n-1}}h)(e_{\a},e_{n-1})$ that $D_{e_{\a}}e_{n+1}=0$ for $\a=1,\ldots,n-2.$
Moreover, it follows from \begin{equation}\begin{aligned} \notag &(\bar\nabla_{e_{n}}h)(e_{n-1},e_{n-1})=(\bar\nabla_{e_{n-1}}h)(e_{n-1},e_{n}),\;\; \\& (\bar\nabla_{e_{n-1}}h)(e_{n},e_{n})=(\bar\nabla_{e_{n}}h)(e_{n-1},e_{n})\end{aligned}\end{equation} of Codazzi's equation that $D_{e_{e_{n-1}}}e_{n+1}=D_{e_{n}}e_{n+1}=0$. Thus, we find $De_{n+1}=0$, i.e., $e_{n+1}$ is a parallel normal vector field. Because the first normal bundle is spanned by $e_{n+1}$, it is a parallel normal bundle. Therefore,  Erbarcher's reduction theorem implies that $M^{n}$ is immersed in an $(n+1)$-dimensional affine subspace of $\mathbb E^{m}$. Consequently, we conclude that the immersion is congruent to \e{4.1}.
\end{proof}

 \section{Ideal submanifolds with type number $\leq 2$ in $S^{m}(1)$.}
 
Now, we classify ideal submanifolds of $S^{m}(1)$ with type number $\leq 2$.

\begin{theorem}\label{T:2} Let $M^{n}$ be an ideal submanifold of a unit $m$-sphere $S^{m}(1)$. If the type number of $M^{n}$ in $S^{m}(1)$ is  $\leq 2$, then either $M^{n}$ is a minimal submanifold of $S^{m}(1)$ or the immersion of $M^{n}$ into $S^{m}(1)\subset \mathbb E^{m+1}$ is congruent to
\begin{equation}\begin{aligned} \label{5.1} &\text{\small$\Bigg($}\! \sqrt{1-a^{2}}\hskip.006in \sin x_{1},\cos x_{1 }\sin x_{2},\ldots, \sin x_{n-2}\prod_{j=1}^{n-3}\cos x_{j}, \prod_{j=1}^{n-2}\cos x_{j},\\ & \hskip.1in 
a\sin x_{1}\sin x_{n-1},  a\sin x_{1}\cos x_{n-1}\sin x_{n}, a \sin x_{1}\cos x_{n-1}\cos x_{n},0,\ldots,0\text{\small$\Bigg)$}\end{aligned}\end{equation}
for some real number $a$ satisfying $0< a< 1$.
\end{theorem}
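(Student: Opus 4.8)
The plan is to run the argument of the proof of Theorem~\ref{T:1} with the ambient curvature $c=0$ replaced by $c=1$ and the Euclidean factor $\mathbb E^{n-2}$ replaced by a great $(n-2)$-sphere; the essential new feature is that the position vector of $S^{n+1}(1)\subset\mathbb E^{n+2}$ now enters the Gauss formula of the immersion, and this is what turns the linear profile appearing in Theorem~\ref{T:1} into a spherical one. So first suppose $M^n$ is a non-minimal ideal hypersurface of $S^{n+1}(1)$ with type number $\le 2$. Exactly as in Theorem~\ref{T:1}, the conditions \e{3.5}--\e{3.6} of Theorem A force $M^n$ to be $(n_1,n-n_1)$-ideal and force \e{4.2} to hold for a suitable local orthonormal frame. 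Since the Codazzi equation \e{2.5} does not involve $c$, relations \e{4.4}--\e{4.5} hold verbatim; hence $\mathcal D_1={\rm Span}\{e_1,\dots,e_{n-2}\}$ and $\mathcal D_2={\rm Span}\{e_{n-1},e_n\}$ are integrable, the leaves of $\mathcal D_1$ are totally geodesic in $M^n$ and — by \e{4.2} — totally geodesic in $S^{n+1}(1)$, and the leaves of $\mathcal D_2$ are totally umbilical in $M^n$ and, again by \e{4.2}, totally umbilical in $S^{n+1}(1)$; being $2$-dimensional totally umbilical submanifolds of a space form they are extrinsic spheres in $S^{n+1}(1)$, and, as in the proof of Theorem~\ref{T:1}, each leaf of $\mathcal D_2$ is also an extrinsic sphere in $M^n$. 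So $\mathcal D_2$ is spherical, Hiepko's theorem gives locally $M^n=L_1\times_f L_2$ where $L_1$ is an open part of a great $(n-2)$-sphere, $L_2$ an open part of a round $2$-sphere, and $f>0$, and choosing coordinates $x_1,\dots,x_{n-2}$ on $L_1$ realizing the round metric in its standard form $g_{L_1}=dx_1^2+\cos^2x_1\,dx_2^2+\cos^2x_1\cos^2x_2\,dx_3^2+\cdots$, the metric of $M^n$ becomes $g=g_{L_1}+f^2(dx_{n-1}^2+\cos^2x_{n-1}\,dx_n^2)$.

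\emph{Determining $f$ and $\lambda$.} Computing $\<R(\p_j,\p_{n-1})\p_{n-1},\p_j\>$ and $\<R(\p_i,\p_n)\p_n,\p_j\>$ $(1\le i\ne j\le n-2)$ from the Gauss equation \e{2.4} with $c=1$ together with \e{4.2}, and also from the Levi-Civita connection of $g$, one obtains ${\rm Hess}_{L_1}f=-f\,g_{L_1}$. On the unit $(n-2)$-sphere the solutions of this equation are precisely the restrictions of linear functions on $\mathbb E^{n-1}$; since $f>0$, after a rotation of $L_1$ we may take $f=a\sin x_1$ with $a>0$. Computing $\<R(\p_{n-1},\p_n)\p_n,\p_{n-1}\>$ in the same two ways then gives $\lambda^2f^2=1-f^2-|\nabla f|^2=1-a^2$, so $\lambda^2=(1-a^2)/(a^2\sin^2x_1)$. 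Since $\lambda\not\equiv0$ (otherwise $M^n$ is totally geodesic, hence minimal), we must have $0<a<1$, and without loss of generality $\lambda=\sqrt{1-a^2}/(a\sin x_1)$. In particular $\lambda$ is nowhere zero, so $M^n$ is non-complete, has no minimal points, and its second fundamental form is determined by its metric, so the immersion is rigid.

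\emph{Integrating the immersion, and higher codimension.} Writing $L:M^n\to S^{n+1}(1)\subset\mathbb E^{n+2}$, the Gauss formula now reads $L_{x_\a x_\b}=\sum_\gamma\Gamma^\gamma_{\a\b}L_{x_\gamma}+h(\p_\a,\p_\b)-g_{\a\b}L$, the extra term $-g_{\a\b}L$ being the second fundamental form of $S^{n+1}(1)$ in $\mathbb E^{n+2}$; together with the Weingarten relations $\p e_{n+1}/\p x_\a=-A_{e_{n+1}}\p_\a$ and the data of the previous steps this is a linear second-order system for $L$ and $e_{n+1}$, whose $x_1$-direction produces $\sin x_1$ and $\cos x_1$ where Theorem~\ref{T:1} had $x_1$. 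Solving it and normalizing the constant vectors of integration by means of the metric yields that $L$ is congruent to \e{5.1}; conversely one checks directly that \e{5.1} is a non-minimal ideal immersion of type number $\le 2$. Finally, if $m\ge n+2$, Theorem A again forces \e{4.2}, and since the Codazzi equation carries no $c$-term the argument of the last paragraph of the proof of Theorem~\ref{T:1} applies unchanged to give $De_{n+1}=0$; hence the first normal bundle is parallel of rank one, Erbacher's reduction theorem places $M^n$ in a totally geodesic $S^{n+1}(1)\subset S^m(1)$, and the conclusion follows from the hypersurface case.

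I expect the main obstacle to be the explicit integration in the third step: the position term $-g_{\a\b}L$, which has no analogue in the Euclidean setting, must be carried through the whole PDE system, and it is precisely this term that converts the linear profile $ax_1$ of Theorem~\ref{T:1} into the profile $a\sin x_1$ of \e{5.1}. A secondary difficulty is showing that the warped base is forced to be a round sphere and that ${\rm Hess}_{L_1}f=-f\,g_{L_1}$ determines $f=a\sin x_1$ up to a rotation.
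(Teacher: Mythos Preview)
Your proposal is correct and follows essentially the same route as the paper's proof: the same warped-product reduction via Hiepko's theorem, the same determination of $f$ and $\lambda$ from the Gauss equation, the same integration of the Gauss--Weingarten system in $\mathbb E^{n+2}$, and the same Erbacher reduction in higher codimension. Your formulation of the equations for $f$ as the intrinsic Obata-type equation ${\rm Hess}_{L_1}f=-f\,g_{L_1}$ (whose solutions on the round sphere are the height functions) is a clean conceptual shortcut for what the paper does by writing out and solving the component system \e{5.5}--\e{5.6} explicitly, but the underlying argument is the same.
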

\begin{proof} First,  assume that $M^n$ is a non-minimal ideal hypersurface  of  $S^{n+1}(1)$ with type number $\leq 2$. then as before there is a local orthonormal frame $\{e_{1},\ldots,e_{n}\}$ such that the second fundamental form of $M^n$ in $S^{m}(1)$ satisfies 
\begin{equation}\begin{aligned}\label{5.2} &h(e_{n-1},e_{n-1})=h(e_{n},e_{n})=\lambda e_{n+1},\;\;  h(e_{i},e_{j})=0,\;\; otherwise.\end{aligned}\end{equation}
Thus, by applying the same argument as before, we know that  $M^{n}$ is locally a warped product $L_{1}\times_{f}L_{2}$, where $L_{1}$ is an open portion of a unit $(n-2)$-sphere and $L_{2}$ is an open portion of a 2-sphere. Hence,  the warped product metric of $L_{1}\times_{f}L_{2}$ takes the following form:
\begin{equation}\begin{aligned}\label{5.3} &g=dx_1^2+\cos^2 x_1 dx_2^2+\cdots+\cos^{2} x_{1}\cdots \cos^{2} x_{n-3}dx_{n-2}^2
\\& \hskip.7in +f^{2}(x_{1},\ldots,x_{n-2})(dx_{n-1}^{2}+\cos^{2}x_{n-1}dx^{2}_{n}).\end{aligned}\end{equation}
Clearly,  $\frac{\p}{\p x_{1}},\ldots,\frac{\p}{\p x_{n}}$ are parallel to $e_{1},\ldots,e_{n}$, respectively.
From \e{5.3}, we get
\begin{equation}\begin{aligned} \label{5.4} &\nabla_{\p_{1}}\p_{1}=0,\;\; 
\;\;\nabla_{\p_{i}}\p_{j}=-\tan x_i\p_{j},\;\; 1\leq i<j\leq n-2,
\\&\nabla_{\p_{2}}\p_{2}=\frac{\sin 2x_1}{2}\p_{1},
\\&\nabla_{\p_{k}}\p_{k}=\frac{\sin 2x_1}{2}\Bigg(\prod_{j=2}^{k-1}\cos^2 x_j\! \Bigg)\p_{1}+\frac{\sin 2x_2}{2}\Bigg(\prod_{j=3}^{k-1}\cos^2 x_j\! \Bigg)\p_{2}\\& \hskip1.2in+\cdots +\frac{\sin 2x_{k-1} }{2}\p_{k-1}, \;\;\; k=3,\ldots,n-2,
\\&\nabla_{\p_{i}}\p_{n-1}=\frac{f_{i}}{f}\p_{n-1},\;\; \nabla_{\p_{i}}\p_{n}=\frac{f_{i}}{f}\p_{n},\;\; i=1,\ldots,n-2,
\\
& \nabla_{\p_{n-1}}\p_{n-1}=-f\Bigg\{f_1 \p_1+f_{2}\sec^2 x_1\p_2+\cdots+f_{n-2}\Bigg(\!\prod_{i=1}^{n-3}\sec^2 x_i\! \Bigg)\p_{n-2}\Bigg\} ,\;\;
\\ & \nabla_{\p_{n-1}}\p_{n}=-\tan x_{n-1}\p_{n},
\\& \nabla_{\p_{n}}\p_{n}=-f\cos^{2}x_{n-1}\Bigg\{f_1 \p_1+\cdots
 +f_{n-2}\Bigg(\prod_{i=1}^{n-3}\sec^2 x_i\!\Bigg) \p_{n-2}\Bigg\}.\end{aligned}\end{equation}

From 
$\<R(\p_{j},\p_{n-1})\p_{n-1},\p_{j}\>=\<R(\p_{i},\p_{n})\p_{n},\p_{j}\>=0,\,1\leq i\ne j\leq n-2,$ \e{5.2} and \e{5.4}, we obtain\begin{equation}\begin{aligned} \label{5.5} & f_{11}=-f,\;\; f_{ij}=-\tan x_i f_j,\;\; 1<i<j\leq n-2,
\\&f_{22}=\frac{\sin 2 x_1}{2}f_1-f\cos^2 x_1,
\\&f_{kk}=\frac{\sin 2x_{1}}{2}\cos^{2}x_{2}\cdots \cos^{2}x_{k-1}f_{1}+\cdots +\frac{\sin 2 x_{k-2}}{2}\cos^{2}x_{k-1}f_{k-2}
 \\& \hskip.4in +\frac{\sin 2 x_{k-1}}{2}f_{k-1} -f\prod_{j=1}^{k-1}\cos^2 x_j,\;\;\; k=3,\ldots,n-2.\end{aligned}\end{equation}
After solving system \e{5.5}, we get
\begin{equation}\begin{aligned} \label{5.6} & f(x_1,\ldots,x_{n-2})=b_1\sin x_1
+b_2 \sin x_2\cos x_1+\cdots \\&\hskip.9in +b_{n-2} \sin x_{n-2}\prod_{j=1}^{n-3}\cos x_j +b_{n-1}\prod_{j=1}^{n-2}\cos x_{n-2}
\end{aligned}\end{equation} for some real numbers $b_1,\ldots,b_{n-1}$. The function $f$  in \e{5.6} is the height function of  $S^{n-2}(1)\subset \mathbb E^{n-1}$ in the direction  $v=(b_1,\ldots,b_{n-1})$. So, after applying a suitable rotation of $S^{n-2}(1)$, we get  $f=a \sin x_1$ for some positive number $a$. Thus, taking into account of \e{5.3}  and \e{5.5}, we have
 \begin{equation}\begin{aligned}\label{5.7} &g=dx_1^2+\cos^2 x_1 dx_2^2+\cdots+\cos^{2} x_{1}\cdots\cos^2 x_{n-3} dx_{n-2}^2\\& \hskip1.0in +a^{2}\sin^2 x_1(dx_{n-1}^{2}+\cos^{2}x_{n-1}dx^{2}_{n})\end{aligned}\end{equation}
and
\begin{equation}\begin{aligned} \label{5.8} &\nabla_{\p_{1}}\p_{1}=0,\;\; 
\nabla_{\p_{i}}\p_{j}=-\tan x_i\p_{j},\;\; 1\leq i<j\leq n-2,
\\&\nabla_{\p_{2}}\p_{2}=\sin x_1\cos x_{1}\p_{1},
\\&\nabla_{\p_{k}}\p_{k}=\frac{\sin 2x_1}{2}\cos^{2} x_{2}\cdots \cos^{2}x_{k-1}\p_{1}+\frac{\sin 2x_2}{2}\cos^{3} x_{3}\cdots\cos^2 x_{k-1}\p_{2}\\& \hskip1.2in+\cdots +\sin x_{k-1} \cos x_{k-1}\p_{k-1}, \;\;\; k=3,\ldots,n-2,
\\&\nabla_{\p_{1}}\p_{n-1}=\cot x_1\p_{n-1},\;\; \nabla_{\p_{1}}\p_{n}=\cot x_1\p_{n},
\\&\nabla_{\p_{j}}\p_{n-1}= \nabla_{\p_{j}}\p_{n}=0,\;\; j=2,\ldots,n-2,
\\& \nabla_{\p_{n-1}}\p_{n-1}=-a^2\sin x_1\cos x_{1} \p_1 ,\;\;
 \nabla_{\p_{n-1}}\p_{n}=-\tan x_{n-1}\p_{n},
\\& \nabla_{\p_{n}}\p_{n}=-a^2\sin x_1\cos x_{1} \cos^2 x_{n-1}\p_1+\sin  x_{n-1}\cos x_{n-1}\p_{n-1}.\end{aligned}\end{equation}
By applying Gauss' equation via \e{5.8}, we obtain 
\begin{align}\label{5.9} \lambda=\frac{\sqrt{1-a^{2}}}{a}\csc x_1,\end{align}

Let $L:M^{n}\to S^{n+1}(1)\subset \mathbb E^{n+2}$ be the immersion of $M^{n}$ into $\mathbb E^{n+2}$. We obtain from \e{5.2}, \e{5.7}, \e{5.8}, \e{5.9} and Gauss' formula that 
\begin{equation}\begin{aligned} \label{5.10} &L_{x_{i}x_{j}}=-L,\;\; 
 L_{x_{1}x_{j}}=-\tan x_1 L_j,\;\; j=2,\ldots,n-2,
\\& L_{x_{1}x_{n-1}}=\cot x_1L_{x_{n-1}},\;\; L_{x_{1}x_{n}}=\cot x_1L_{x_{n}},
\\&L_{x_{2}x_{2}}=\sin x_1\cos x_{1}  L_{x_1}-\cos^2 x_1 L,
\\&L_{x_{k}x_{k}}=\frac{\sin 2x_1}{2}\cos^{2} x_{2}\cdots\cos^2 x_{k-1}L_{x_1}\!+\!\frac{\sin 2x_2}{2}\cos^{3} x_{3}\cdots\cos^2 x_{k-1}L_{x_2}\\& \hskip.3in+\cdots +\frac{\sin 2x_{k-1} }{2}L_{x_{k-1}}\! - \cos^{2}x_{1}\cdots \cos^{2}x_{k-1}   L, \;\;\, k=3,\ldots,n-2,
\\& L_{x_{k}x_{n-1}}=L_{x_{k}x_{n}}=0,\;\; k=2,\ldots,n-2,
\\& L_{x_{n-1}x_{n-1}}= -a^2\sin x_1\cos x_{1}  L_{x_{1}}+a\sqrt{1-a^{2}}\,\sin x_{1} e_{n+1}- a^{2}\sin^2 x_1 L,\;\;
\\& L_{x_{n-1}x_{n}}=-\tan x_{n-1}L_{x_{n}},
\\& L_{x_{n}x_{n}}= - a^2\sin x_1\cos x_{1} \cos^{2}x_{n-1}L_{x_{1}}+a^2\sin x_{n-1}\cos x_{n-1} L_{x_{n-1}}\\& \hskip.6in + a\sqrt{1\! -\! a^{2}}\, \sin x_{1} \cos^{2}x_{n-1}e_{n+1} - a^{2}\sin^2 x_1 \cos^{2}x_{n-1} L.
\end{aligned}\end{equation}
Moreover, from \e{5.2}, \e{5.7}, \e{5.9} and Weingarten's formula we have
\begin{equation}\begin{aligned} \label{5.11} &\frac{\p e_{n+1}}{\p {x_{j}}}=0,\;\; j=1,\ldots,n-2,
\\& \frac{\p e_{n+1}}{\p {x_{n-1}}}=-\frac{\sqrt{1-a^{2}}}{a}\csc x_1 L_{x_{n-1}},\;\;\frac{\p e_{n+1}}{\p {x_{n}}}=-\frac{\sqrt{1-a^{2}}}{a}\csc x_1 L_{x_{n}}.\end{aligned}\end{equation}
After solving system \e{5.10}-\e{5.11} we obtain
\begin{equation}\begin{aligned} \label{5.12} &L(x_{1},\ldots,x_{n})=c_{1}\sin x_{1}
+c_{2}\sin x_{2}\cos x_{1}+\cdots \\&\hskip1.1in +c_{n-2}\sin x_{n-2}\prod_{j=1}^{n-3}\cos x_{j}  +c_{n-1}\prod_{j=1}^{n-2}\cos x_{j}
\\&\hskip.3in + \sin x_{1}(c_{n}\sin x_{n-1}+c_{n+1}\cos x_{n-1}\sin x_{n}+c_{n+2}\cos x_{n-1}\cos x_{n})
\end{aligned}\end{equation}
for some vectors $c_{1},\ldots,c_{n+1}\in \mathbb E^{n+2}$. Therefore, after applying \e{5.7} and \e{5.12}, we conclude that $L$ is congruent to  
\begin{equation}\begin{aligned}  \notag&\text{\small$\Bigg($}\! \sqrt{1-a^{2}}\hskip.006in \sin x_{1},\cos x_{1 }\sin x_{2},\ldots, \sin x_{n-2}\prod_{j=1}^{n-3}\cos x_{j}, \prod_{j=1}^{n-2}\cos x_{j},\\ & \hskip.3in 
a\sin x_{1}\sin x_{n-1},  a\sin x_{1}\cos x_{n-1}\sin x_{n}, a \sin x_{1}\cos x_{n-1}\cos x_{n}\text{\small$\Bigg)$}.\end{aligned}\end{equation}

If $M^{n}$ is non-minimal and ideal in $S^{m}(1)$ with $m\geq n+2$ and type number $\leq 2$.  Then by applying  Theorem A and Codazzi's equation we know that  the first normal bundle is a parallel normal bundle. Therefore, the reduction theorem implies that $M^{n}$ is immersed in a totally geodesic  $S^{n+1}(1)\subset S^{m}(1)$. Consequently, the immersion is congruent to \e{5.1}.
\end{proof}

 \section{Ideal submanifolds with type number $\leq 2$ in $H^{m}(-1)$.}
 
 Let $\mathbb E_{1}^{m+1}$ denote the $(m+1)$-dimensional Minkowski spacetime endowed with the Lorentzian metric 
\begin{align}g= - du_{i}^{2} + \sum_{j=2}^{m+1}du_{j}^{2}. \end{align}
 We put
$$H^m(-1)=\left\{u=(u_{1},\ldots,u_{m+1})\in \mathbb E^{m+1}_{1}: g(u,u)=-1 \; {\rm and}\; u_{1}>0\right\}.$$
Then  $H^m(-1)$ is the hyperbolic $m$-space of constant sectional curvature $-1$.
 
In this section, we classify  ideal submanifolds of $H^{m}(-1)$ with type number $\leq 2$.

\begin{theorem} \label{T:3}  Let $\phi:M^{n}\to H^{m}(-1)$ be an ideal immersion of a Riemannian $n$-manifold into $H^{m}(-1)$. If $M^{n}$ has type number $\leq 2$, then either $M^{n}$ is  minimal in $H^{m}(-1)$ or $\phi(M^{n})$ lies in a totally geodesic $H^{n+1}(-1)\subset H^{m}(-1)$. Moreover, in the later case the corresponding immersion $L:M^{n}\to H^{n+1}(-1)\subset \mathbb E^{n+2}_{1}$ is congruent to one  of the following three immersions:

$$\leqno{(A)}\hskip.1in\left\{\begin{aligned}  \Bigg( &\frac{ab \sinh x_{n-2}+(1+b^{2})\cosh x_{n-2}}{\sqrt{1+b^{2}}}\prod_{j=1}^{n-3}\cosh x_{j},\sinh x_{1},\sinh x_{2}\cosh x_{1}, 
\\  &\hskip.2in \ldots,\, \sinh x_{n-3}\prod_{j=1}^{n-4}\cosh x_{j},
 \frac{\sqrt{1-a^{2}+b^{2}}}{\sqrt{1+b^{2}}}\sinh x_{n-2}\prod_{j=1}^{n-3}\cosh x_{j},
 \\ & \hskip.3in  (a \sinh x_{n-2}\!+\! b\cosh x_{n-2})\cos x_{n-1}\cos x_{n}\!\prod_{j=1}^{n-3}\!\cosh x_j,
  \\& \hskip.4in (a \sinh x_{n-2}+b\cosh x_{n-2})\cos x_{n-1}\sin x_{n}\prod_{j=1}^{n-3}\cosh x_j , 
  \\ &\hskip.2in   (a \sinh x_{n-2}+b\cosh x_{n-2}) \sin x_{n-1}\prod_{j=1}^{n-3}\cosh x_j \Bigg),\; \; a^{2}<1+b^{2};\end{aligned}\right.$$
$$\leqno{(B)}\hskip.1in\left\{\begin{aligned}  \Bigg( &\frac{a(b^{4}\!-\!4 \!+\!4b^{2}(x_{n-1}^{2}\!+\!x_{n}^{2}))\sinh x_{n-2}+b(b^{4}\!+\!4 \!+\!4b^{2}(x_{n-1}^{2}\!+\!x_{n}^{2}))\cosh x_{n-2}}{4b^{3}\prod_{j=1}^{n-3}\sech\, x_{j}},
\\& \frac{a(b^{4}\!+\!4 \!-\!4b^{2}(x_{n-1}^{2}\!+\!x_{n}^{2}))\sinh x_{n-2}+b(b^{4}\!-\!4 \!-\!4b^{2}(x_{n-1}^{2}\!+\!x_{n}^{2}))\cosh x_{n-2}}{4b^{3}\prod_{j=1}^{n-3}\sech\, x_{j}},
\\& \hskip.3in \sinh x_{1}, \, \ldots,\, \sinh x_{n-3}\prod_{j=1}^{n-4}\cosh x_{j},\frac{\sqrt{b^{2}\!-\!a^{2}}}{b}\sinh x_{n-2}\prod_{j=1}^{n-3}\cosh x_{j},
 \\& \hskip.5in  (a \sinh x_{n-2}\!+\! b\cosh x_{n-2}) x_{n-1}\!\prod_{j=1}^{n-3}\!\cosh x_j,
  \\& \hskip.6in (a \sinh x_{n-2}+b\cosh x_{n-2}) x_{n}\prod_{j=1}^{n-3}\cosh x_j  \Bigg),\;\;  a^{2}<b^{2};\end{aligned}\right.$$
$$\leqno{(C)}\hskip.1in\left\{\begin{aligned}   & \Bigg(\! (a\sinh x_{n-2}+b\cosh x_{n-2})\cosh x_{n-1}\cosh x_{n}\! \prod_{j=1}^{n-3}\! \cosh x_{j},
\\& \hskip.4in (a\sinh x_{n-2}+b\cosh x_{n-2})\cosh x_{n-1}\sinh x_{n}\! \prod_{j=1}^{n-3}\! \cosh x_{j},
\\&\hskip.6in  (a\sinh x_{n-2}+b\cosh x_{n-2})\sinh x_{n-1}\! \prod_{j=1}^{n-3}\! \cosh x_{j},
\\& \hskip.4in \sinh x_{1}, \sinh x_{2} \cosh x_{1},\,\ldots,\, \sinh x_{n-3}\! \prod_{{j=1}}^{n-4}\! \cosh x_{j},
 \\& \hskip.1in \frac{\sqrt{b^{2}-a^{2}-1}}{\sqrt{1+a^{2}}}\prod_{j=1}^{n-2} \!\cosh x_{j},
 \frac{ab\cosh x_{n-2}+ (1+a^{2})\sinh x_{n-2}}{\sqrt{1+a^{2}}}  \prod_{j=1}^{n-3}\! \cosh x_{j}\! \Bigg),\; \;\\&\hskip3in  1+a^{2}<b^{2}.\end{aligned}\right.$$

\end{theorem}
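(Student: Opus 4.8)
The plan is to follow the pattern of the proofs of Theorems \ref{T:1} and \ref{T:2}, the genuinely new feature being that a $2$-dimensional extrinsic sphere of $H^{n+1}(-1)$ need not be a round sphere but can be flat or hyperbolic. First I would reduce to the hypersurface case. If $M^{n}$ is non-minimal and ideal in $H^{m}(-1)$ with type number $\le 2$, then Theorem A, together with the type-number hypothesis and non-minimality, again provides a local orthonormal frame $\{e_{1},\dots,e_{n}\}$ of $M^{n}$ with $h(e_{n-1},e_{n-1})=h(e_{n},e_{n})=\lambda e_{n+1}$, $\lambda\ne 0$, and $h(e_{i},e_{j})=0$ otherwise. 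Writing out $(\overline\nabla_{e_{\a}}h)(e_{n-1},e_{n-1})=(\overline\nabla_{e_{n-1}}h)(e_{\a},e_{n-1})$ for $\a\le n-2$ and the analogous Codazzi identities among $e_{n-1},e_{n}$ yields $De_{n+1}=0$, so the first normal bundle is a parallel line bundle; Erbacher's reduction theorem then places $M^{n}$ in a totally geodesic $H^{n+1}(-1)\subset H^{m}(-1)$. Hence it suffices to classify non-minimal ideal hypersurfaces of $H^{n+1}(-1)$ with type number $\le 2$.

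For such a hypersurface, Codazzi's equation (exactly as in \e{4.4}--\e{4.5}) shows that $\mathcal D_{1}=\mathrm{Span}\{e_{1},\dots,e_{n-2}\}$ and $\mathcal D_{2}=\mathrm{Span}\{e_{n-1},e_{n}\}$ are integrable, that the leaves of $\mathcal D_{1}$ are totally geodesic in $M^{n}$ and hence in $H^{n+1}(-1)$, and that the leaves of $\mathcal D_{2}$ are totally umbilical in $M^{n}$ and, by the form of $h$, extrinsic spheres in $H^{n+1}(-1)$; since each such leaf is then also an extrinsic sphere in $M^{n}$, $\mathcal D_{2}$ is a spherical distribution and Hiepko's theorem gives a local warped product $M^{n}=L_{1}\times_{f}L_{2}$. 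Being a totally geodesic $(n-2)$-dimensional submanifold of $H^{n+1}(-1)$, $L_{1}$ is an open part of $H^{n-2}(-1)$; being a $2$-dimensional extrinsic sphere of $H^{n+1}(-1)$, $L_{2}$ is, by the classification of totally umbilical surfaces of hyperbolic space, an open part of a round $2$-sphere, of the Euclidean plane (a horosphere), or of a hyperbolic plane, according as its constant intrinsic curvature is positive, zero, or negative. Accordingly, the warped-product metric of $M^{n}$ is $g=g_{n-2}+f^{2}g_{2}$, where $g_{n-2}=dx_{1}^{2}+\cosh^{2}x_{1}\,dx_{2}^{2}+\cdots+(\prod_{j=1}^{n-3}\cosh^{2}x_{j})\,dx_{n-2}^{2}$ is the standard metric of $H^{n-2}(-1)$ and $g_{2}$ is $dx_{n-1}^{2}+\cos^{2}x_{n-1}\,dx_{n}^{2}$, $dx_{n-1}^{2}+dx_{n}^{2}$, or $dx_{n-1}^{2}+\cosh^{2}x_{n-1}\,dx_{n}^{2}$ in cases (A), (B), (C) respectively.

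In each case I would next determine the warping function $f=f(x_{1},\dots,x_{n-2})$. Computing the Levi-Civita connection of $g$ and feeding it, with the form of $h$, into the Gauss equation for the planes $\p_{j}\wedge\p_{n-1}$, $1\le j\le n-2$, produces a linear second-order system for $f$ whose general solution is the restriction to $H^{n-2}(-1)\subset\mathbb E^{n-1}_{1}$ of a linear functional of $\mathbb E^{n-1}_{1}$; an isometry of $H^{n-2}(-1)$ then normalizes $f$ to $(a\sinh x_{n-2}+b\cosh x_{n-2})\prod_{j=1}^{n-3}\cosh x_{j}$. The Gauss equation for $\p_{n-1}\wedge\p_{n}$ determines $\lambda$, and equating the intrinsic curvature of $L_{2}$ computed inside the warped product with the value forced by the umbilicity of $L_{2}$ in $H^{n+1}(-1)$ gives $\lambda^{2}f^{2}=\varepsilon-(a^{2}-b^{2})$ with $\varepsilon=1,0,-1$ in cases (A), (B), (C); demanding that $\lambda$ be real and nowhere zero then forces exactly the ranges $a^{2}<1+b^{2}$, $a^{2}<b^{2}$, $1+a^{2}<b^{2}$, and renders the immersion rigid as in \e{4.14} and \e{5.9}. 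Finally, substituting $h$, $\lambda$, the metric and the connection into the Gauss formula $L_{x_{\a}x_{\b}}=\sum_{\gamma}\Gamma^{\gamma}_{\a\b}L_{x_{\gamma}}+h_{\a\b}e_{n+1}+g_{\a\b}L$ for $H^{n+1}(-1)\subset\mathbb E^{n+2}_{1}$ and the Weingarten formula for $\partial e_{n+1}/\partial x_{\a}$ gives a linear PDE system for the position vector $L:M^{n}\to H^{n+1}(-1)\subset\mathbb E^{n+2}_{1}$; integrating it and applying one last ambient isometry yields precisely the immersion (A), (B), or (C).

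The main obstacle is this last integration in the horospherical case (B): there the relevant linear system is degenerate of parabolic type --- this is exactly why the quadratic terms $x_{n-1}^{2}+x_{n}^{2}$ appear in (B) --- so one cannot simply diagonalize and exponentiate as in cases (A) and (C), but must introduce auxiliary ``primitive'' vector fields to solve it. A subsidiary difficulty is the bookkeeping: one must check that the isometry group of $H^{n-2}(-1)$ is large enough to bring an arbitrary linear functional into the stated shape, that in cases (A) and (C) the extra normalizing factors $\sqrt{1+b^{2}}$ and $\sqrt{1+a^{2}}$ (arising from completing the hyperbolic frame used in the integration) come out as written, and that the three listed immersions do have the prescribed metric and second fundamental form, so that they are indeed ideal with type number $\le 2$.
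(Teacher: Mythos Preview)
Your proposal is correct and follows essentially the same route as the paper: reduction to codimension one via Codazzi and Erbacher, the warped-product splitting $H^{n-2}(-1)\times_{f}L_{2}$ with the three possibilities for $L_{2}$, determination of $f$ and $\lambda$ from the Gauss equation, and integration of the resulting linear PDE system for the position vector in $\mathbb E^{n+2}_{1}$. The only organizational difference is that the paper performs the codimension reduction at the end of each case rather than up front, and it simply records the solution \e{6.21} in case~(B) without commenting on the degeneracy you flag; your remark about the parabolic nature of that system and the appearance of the quadratic terms $x_{n-1}^{2}+x_{n}^{2}$ is a helpful gloss on what the paper leaves implicit.
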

\begin{proof} Assume that $M^n$ is a non-minimal ideal hypersurface  of  $H^{n+1}(-1)$ with type number $\leq 2$. Then there is a local orthonormal frame $\{e_{1},\ldots,e_{n}\}$ such that 
\begin{equation}\begin{aligned}\label{6.2} &h(e_{n-1},e_{n-1})=h(e_{n},e_{n})=\lambda e_{n+1},\;\; h(e_{i},e_{j})=0,\;\; otherwise.\end{aligned}\end{equation}
By applying the same argument as before, we know that  $M^{n}$ is locally a warped product $L_{1}\times_{f}L_{2}$, where $L_{1}$ is an open portion of $H^{n-2}(-1)$ and $L_{2}$ is a totally umbilical surface of $H^{n+1}(-1)$. Therefore, the warped product metric of $L_{1}\times_{f}L_{2}$ takes one of the following three forms:
\begin{align}\label{6.3} &g_{1}=dx_1^2+\cosh^2 x_1 dx_2^2+\cdots+\cosh^{2} x_{1}\cdots \cosh^{2} x_{n-3}dx_{n-2}^2
\\ \notag &\hskip.5in +f^{2}(x_{1},\ldots,x_{n-2})(dx_{n-1}^{2}+\cos^{2}x_{n-1}dx^{2}_{n});
\\ \label{6.4} &g_{2}=dx_1^2+\cosh^2 x_1 dx_2^2+\cdots+\cosh^{2} x_{1}\cdots \cosh^{2} x_{n-3}dx_{n-2}^2
\\ \notag &\hskip.5in +f^{2}(x_{1},\ldots,x_{n-2})(dx_{n-1}^{2}+dx^{2}_{n});\end{align}\begin{align}
\label{6.5} &g_{3}=dx_1^2+\cosh^2 x_1 dx_2^2+\cdots+\cosh^{2} x_{1}\cdots \cosh^{2} x_{n-3}dx_{n-2}^2
\\ \notag &\hskip.5in +f^{2}(x_{1},\ldots,x_{n-2})(dx_{n-1}^{2}+\cosh^{2}x_{n-1}dx^{2}_{n}).\end{align}

{\it Case} (1):  {\it The metric of $L_{1}\times_{f}L_{2}$ is}  \e{6.3}. In this case, we find \begin{equation}\begin{aligned} \label{6.6} &\nabla_{\p_{1}}\p_{1}=0,\;\; 
\;\; \\& \nabla_{\p_{i}}\p_{j}=\tanh x_i\p_{j},\;\; 1\leq i<j\leq n-2,
\\&\nabla_{\p_{2}}\p_{2}=-\sinh x_1\cosh x_{1}\p_{1},
\\&\nabla_{\p_{k}}\p_{k}=-\frac{\sinh 2x_1}{2}\prod_{j=2}^{k-1}\cosh^2 x_j \p_{1}-\frac{\sinh 2x_2}{2}\prod_{j=3}^{k-1}\cosh^2 x_j \p_{2}
\\& \hskip.9in -\cdots -\sinh x_{k-1}\cosh x_{k-1}\p_{k-1}, \;\;\; k=3,\ldots,n-2,
\\&\nabla_{\p_{i}}\p_{n-1}=\frac{f_{i}}{f}\p_{n-1},\;\; \nabla_{\p_{i}}\p_{n}=\frac{f_{i}}{f}\p_{n},\;\; i=1,\ldots,n-2,
\\& \nabla_{\p_{n-1}}\p_{n-1}=-f\Big\{f_1 \p_1\!+\!f_{2}\,\sech^2 x_1\p_2+\cdots+\!  f_{n-2}\prod_{i=1}^{n-3}\sech^2 x_i\p_{n-2}\Big\} ,\;\;
\\& \nabla_{\p_{n-1}}\p_{n}=-\tan x_{n-1}\p_{n},
\\& \nabla_{\p_{n}}\p_{n}=-f\cos^{2}x_{n-1}\Big\{f_1 \p_1+\cdots+\!  f_{n-2}\prod_{i=1}^{n-3}\sech^2 x_i\p_{n-2}\Big\}.\end{aligned}\end{equation}

From 
$\<R(\p_{j},\p_{n-1})\p_{n-1},\p_{j}\>=\<R(\p_{i},\p_{n})\p_{n},\p_{j}\>=0,\,1\leq i\ne j\leq n-2,$ \e{6.2} and \e{6.6}, it follows that 
\begin{equation}\begin{aligned} \label{6.7} & f_{11}=f,\;\; f_{ij}=\tanh x_i f_j,\;\; 1<i<j\leq n-2,
\\&f_{22}+\sinh x_1\cosh x_{1} f_1=f\cosh^2 x_1,
\\&f_{kk}+f_{1}\sinh x_1\cosh x_{1}\cosh^{2}x_{2}\cdots \cosh^{2}x_{k-1}+\cdots  \\& \hskip.1in+f_{k-2}\sinh x_{k-2}\cosh x_{k-2}\cosh^{2}x_{k-1}
 +f_{k-1} \sinh x_{k-1}\cosh x_{k-1} \\&\hskip.2in =f\cosh^2 x_1\cdots \cosh^{2}x_{k-1},\;\;\; k=3,\ldots,n-2.\end{aligned}\end{equation} 
After solving system \e{6.7}, we obtain
\begin{equation}\begin{aligned} \label{6.8} & f(x_1,\ldots,x_{n-2})=b_1\sinh x_1
+b_2 \sinh x_2\cosh x_1+\cdots \\&\hskip.4in +b_{n-2} \sinh x_{n-2}\prod_{j=1}^{n-3}\cosh x_j +b_{n-1}\prod_{j=1}^{n-2}\cosh x_{j}
\end{aligned}\end{equation} for some real numbers $b_1,\ldots,b_{n-1}$.  So, after applying a suitable rotation of  $H^{n+1}(-1)$,  we get 
\begin{align}f=(a \sinh x_{n-2}+b\cosh x_{n-2})\prod_{j=1}^{n-3}\cosh x_j\end{align} for some real numbers $a$ and $b$, not both zero.
Consequently, we obtain 
\begin{equation}\begin{aligned}\label{6.10} &g_{1}=dx_1^2+\cosh^2 x_1 dx_2^2+\cdots+\cosh^{2} x_{1}\cdots \cosh^{2} x_{n-3}dx_{n-2}^2
\\  &\hskip.1in +(a \sinh x_{n-2}+b\cosh x_{n-2})^{2}\prod_{j=1}^{n-3}\cosh^{2} x_j (dx_{n-1}^{2}+\cos^{2}x_{n-1}dx^{2}_{n})\end{aligned}\end{equation}
which implies that the Levi-Civita connection satisfies
\begin{equation}\begin{aligned} \label{6.11} &\nabla_{\p_{1}}\p_{1}=0,\;\; 
\nabla_{\p_{i}}\p_{j}=\tanh x_i\p_{j},\;\; 1\leq i<j\leq n-2,
\\& \nabla_{\p_{2}}\p_{2}=-\frac{\sinh 2x_1}{2}\p_{1},
\\&\nabla_{\p_{k}}\p_{k}=-\sum_{j=1}^{k-1}\frac{\sinh 2x_j}{2}\prod_{i=j+1}^{k-1}\! \cosh^{2} x_{i}\p_{j}, \;\; k=3,\ldots,n-2, 
\\&\nabla_{\p_{s}}\p_{r}=\tanh x_{s} \p_{r},\;\; s=1,\ldots,n-3, \;\; r=n-1,n,\\&\nabla_{\p_{n-2}}\p_{r}=\frac{a \cosh x_{n-2}+b\sinh x_{n-2}}{a \sinh x_{n-2}+b\cosh x_{n-2}} \p_{r},\;\; r=n-1,n, 
\\& \nabla_{\p_{n-1}}\p_{n-1}=-(a \sinh x_{n-2}\!+\!b\cosh x_{n-2})^{2}\Bigg\{\frac{\sinh 2x_{1}}{2}\prod_{j=2}^{n-3}\cosh^{2}x_{j} \p_{1} \\& \hskip.6in+ \frac{\sinh 2x_{2}}{2}\prod_{j=3}^{n-3}\cosh^{2}x_{j} \p_{2}+\cdots +
\frac{\sinh 2x_{n-3}}{2} \p_{n-3}\Bigg\}
\\& \hskip.6in -(a \sinh x_{n-2}\!+\!b\cosh x_{n-2})(a \cosh x_{n-2}\!+\!b\sinh x_{n-2}) \p_{n-2},\;\;
\\& \nabla_{\p_{n-1}}\p_{n}=-\tan x_{n-1}\p_{n},
\\& \nabla_{\p_{n}}\p_{n}=-(a \sinh x_{n-2}\!+\!b\cosh x_{n-2})^{2}\cos^{2} x_{n-1}\Bigg\{\! \frac{\sinh 2x_{1}}{2}\! \prod_{j=2}^{n-3} \cosh^{2}x_{j} \p_{1} \\& \hskip.6in+ \frac{\sinh 2x_{2}}{2}\prod_{j=3}^{n-3}\cosh^{2}x_{j} \p_{2}+\cdots +
\frac{\sinh 2x_{n-3}}{2} \p_{n-3}\Bigg\}
\\& \hskip.2in -(a \sinh x_{n-2}\!+\!b\cosh x_{n-2})(a \cosh x_{n-2}\!+\!b\sinh x_{n-2})\cos^{2} x_{n-1} \p_{n-2}\\& \hskip1.4in +\frac{\sin 2 x_{n-1}}{2}\p_{n-1}.
\end{aligned}\end{equation}

From the equation $\<R(\p_{n-1},\p_{n})\p_{n},\p_{n-1}\>=(\lambda^{2}-1)g_{n-1n-1}g_{nn}$
 of Gauss and  \e{6.11} we find
\begin{align}\label{6.12}\lambda^{2}=\frac{(1-a^{2}+b^{2})}{(a \sinh x_{n-2}+b \cosh x_{n-2})^{2}}\prod_{j=1}^{n-3} \,\sech^{2}x_{j},\end{align} which implies $a^{2}<1+b^{2}$.
Thus, we may put
\begin{align}\label{6.13} \lambda=\frac{\sqrt{1-a^{2}+b^{2}}}{a \sinh x_{n-2}+b \cosh x_{n-2}}\prod_{j=1}^{n-3} \,\sech x_{j},\;\;\; a^{2}<1+b^{2}.\end{align}
Hence, $M^{n}$ does not contain minimal points in $H^{n+1}(-1)$.

Let $L:M^{n}\to H^{n+1}(-1)\subset \mathbb E_{1}^{n+2}$ denote the immersion of $M^{n}$ into $\mathbb E_{1}^{n+2}$. Then we get from \e{6.2}, \e{6.10}, \e{6.11}, \e{6.13} and Gauss' formula that
\begin{equation}\begin{aligned}\label{6.14} &L_{x_{i}x_{j}}=L,\;\; 
L_{x_{1}x_{j}}=\tanh x_1 L_j,\;\; j=2,\ldots,n-2,
\\&  L_{x_{2}x_{2}}=-\sinh x_1\cosh x_{1} L_{x_1}+\cosh^2 x_1 L,
\\&  L_{x_{k}x_{k}}=\prod_{j=1}^{k-1}\cosh^{2} x_{j}L-\sum_{j=1}^{k-1}\frac{\sinh 2x_j}{2}\! \prod_{i=j+1}^{k-1}\! \cosh^{2} x_{i}L_{j}, \; k=3,\ldots,n\!-\!2, 
\\&L_{x_{s}x_{r}}=\tanh x_{s} L_{x_{r}},\;\; s=1,\ldots,n-3, \;\; r=n-1,n,
\\&L_{x_{n-2}x_{r}} =\frac{a \cosh x_{n-2}+b\sinh x_{n-2}}{a \sinh x_{n-2}+b\cosh x_{n-2}} L_{x_{r}},
\;\; r=n-1,n, 
\\&  L_{x_{n-1}x_{n-1}}= (a \sinh x_{n-2}+b\cosh x_{n-2})^{2}\prod_{j=1}^{n-3}\cosh^{2} x_j  L\\&\hskip.4in -(a \sinh x_{n-2}\!+\!b\cosh x_{n-2})^{2}\Bigg\{\frac{\sinh 2x_{1}}{2} \prod_{j=2}^{n-3}\cosh^{2}x_{j}  L_{x_{1}} \\& \hskip.6in+ \frac{\sinh 2x_{2}}{2} \prod_{j=3}^{n-3}\cosh^{2}x_{j} L_{x_{2}}+\cdots +
\frac{\sinh 2x_{n-3}}{2} L_{x_{n-3}}\Bigg\}
\\& \hskip.6in -(a \sinh x_{n-2}\!+\!b\cosh x_{n-2})(a \cosh x_{n-2}\!+\!b\sinh x_{n-2})L_{x_{n-2}}
\\&\hskip.4in +\sqrt{1-a^{2}+b^{2}} (a \sinh x_{n-2}+b\cosh x_{n-2})\prod_{j=1}^{n-3}\cosh x_j e_{n+1},
\\&  L_{x_{n-1}x_{n}}=-\tan x_{n-1}L_{x_{n}},
\\&  L_{x_{n}x_{n}}=(a \sinh x_{n-2}+b\cosh x_{n-2})^{2}\cos^{2} x_{n-1}\prod_{j=1}^{n-3}\cosh^{2} x_j  L\\&\hskip.1in -(a \sinh x_{n-2}\!+\!b\cosh x_{n-2})^{2}\cos^{2} x_{n-1}\Bigg\{\! \frac{\sinh 2x_{1}}{2}\! \prod_{j=2}^{n-3} \cosh^{2}x_{j} L_{x_{1}} \\& \hskip.6in+ \frac{\sinh 2x_{2}}{2}\prod_{j=3}^{n-3}\cosh^{2}x_{j} L_{x_{2}}+\cdots +
\frac{\sinh 2x_{n-3}}{2} L_{x_{n-3}}\Bigg\}
\\& \hskip.2in -(a \sinh x_{n-2}\!+\!b\cosh x_{n-2})(a \cosh x_{n-2}\!+\!b\sinh x_{n-2})\cos^{2} x_{n-1} L_{x_{n-2}}
\end{aligned}\end{equation}\begin{equation}\begin{aligned}\notag
\\& \hskip1.4in +\sin  x_{n-1}\cos x_{n-1}L_{x_{n-1}}
\\&\hskip.2in +\sqrt{1-a^{2}+b^{2}} \,(a \sinh x_{n-2}+b\cosh x_{n-2})\cos^{2}x_{n-1}\prod_{j=1}^{n-3}\cosh x_j e_{n+1}
.\end{aligned}\end{equation}
Moreover, it follows from \e{6.2}, \e{6.10}, \e{6.13} and Weingarten's formula that
\begin{equation}\begin{aligned} \label{6.15} &\frac{\p e_{n+1}}{\p {x_{j}}}=0,\;\; j=1,\ldots,n-2,
\\& \frac{\p e_{n+1}}{\p {x_{r}}}=-\frac{\sqrt{1-a^{2}+b^{2}}\,\sech\, x_{1}\cdots\, \sech\, x_{{n-3}}}{a \sinh x_{n-2}+b \cosh x_{n-2}} L_{x_{r}},\;\; r=n-1,n.\end{aligned}\end{equation}
Solving system \e{6.14}-\e{6.15} gives 
\begin{equation}\begin{aligned} \label{6.16} &  L(x_{1},\ldots,x_{n})=c_{1}\sinh x_{1}+\cosh x_{1}\Big\{
c_{2}\sinh x_{2}+c_{3}\sinh x_{3}\cosh x_{2} 
\\&\hskip.4in +\cdots+c_{n-2}\sinh x_{n-2}\prod_{j=2}^{n-3}\cosh x_{j}  +c_{n-1}\prod_{j=2}^{n-2}\cosh x_{j}\Big\}
\\&\hskip.5in +(a \sinh x_{n-2}+b\cosh x_{n-2})\prod_{j=1}^{n-3}\cosh x_j \big\{c_{n}\sin x_{n-1}
\\& \hskip.9in +c_{n+1}\cos x_{n-1}\sin x_{n}+c_{n+2}\cos x_{n-1}\cos x_{n}\big\}.
\end{aligned}\end{equation} 
Now, we conclude from \e{6.10} and \e{6.16} that $L$ is congruent to immersion (A).

If $M^{n}$ is non-minimal and ideal in $H^{m}(-1)$ with type number $\leq 2$, then it follows from Theorem A, Codazzi's equation and Reduction Theorem that  $M^{n}$ is immersed in a totally geodesic  $H^{n+1}(-1)\subset H^{m}(-1)$. Therefore, we obtain the same conclusion.
\vskip.1in

{\it Case} (2):  {\it The metric of $L_{1}\times_{f}L_{2}$ is}  \e{6.4}. In this case, we find in the same way as case (1) that the warping function is given by \e{6.8}. Thus, without loss of generality, we may assume that the metric tensor is given by
\begin{equation}\begin{aligned}\label{6.17} &g_{2}=dx_1^2+\cosh^2 x_1 dx_2^2+\cdots+\cosh^{2} x_{1}\cdots \cosh^{2} x_{n-3}dx_{n-2}^2
\\  &\hskip.1in +(a \sinh x_{n-2}+b\cosh x_{n-2})^{2}\prod_{j=1}^{n-3}\cosh^{2} x_j (dx_{n-1}^{2}+dx^{2}_{n}).\end{aligned}\end{equation}
Now, by applying a similar argument as case (1), we get
\begin{align}\label{6.18}\lambda= \frac{\sqrt{b^{2}-a^{2}}\, }{a \sinh x_{n-2}+b \cosh x_{n-2}}\prod_{j=1}^{n-3}\sech\, x_{j}\end{align}
for some real numbers $a,b$ satisfying $b^{2}>a^{2}$. Hence, we  obtain the following system for the immersion $L: M^{n}\to H^{n+1}(-1)\subset \mathbb E^{n+2}_{1}$.

\begin{equation}\begin{aligned}\label{6.19} &L_{x_{i}x_{j}}=L,\;\; 
L_{x_{1}x_{j}}=\tanh x_1 L_j,\;\; j=2,\ldots,n-2,
\\&  L_{x_{2}x_{2}}=-\sinh x_1\cosh x_{1} L_{x_1}+\cosh^2 x_1 L,
\\&  L_{x_{k}x_{k}}=\prod_{j=1}^{k-1}\cosh^{2} x_{j}L-\sum_{j=1}^{k-1}\frac{\sinh 2x_j}{2}\! \prod_{i=j+1}^{k-1}\! \cosh^{2} x_{i}L_{j}, \; k=3,\ldots,n\!-\!2, 
\\&L_{x_{s}x_{r}}=\tanh x_{s} L_{x_{r}},\;\; s=1,\ldots,n-3, \;\; r=n-1,n,
\\&L_{x_{n-2}x_{r}} =\frac{a \cosh x_{n-2}+b\sinh x_{n-2}}{a \sinh x_{n-2}+b\cosh x_{n-2}} L_{x_{r}},
\;\; r=n-1,n, 
\\&  L_{x_{n-1}x_{n-1}}= (a \sinh x_{n-2}+b\cosh x_{n-2})^{2}\prod_{j=1}^{n-3}\cosh^{2} x_j  L\\&\hskip.4in -(a \sinh x_{n-2}\!+\!b\cosh x_{n-2})^{2}\Bigg\{\frac{\sinh 2x_{1}}{2} \prod_{j=2}^{n-3}\cosh^{2}x_{j}  L_{x_{1}}
\\& \hskip.6in+ \frac{\sinh 2x_{2}}{2} \prod_{j=3}^{n-3}\cosh^{2}x_{j} L_{x_{2}}+\cdots +
\frac{\sinh 2x_{n-3}}{2} L_{x_{n-3}}\Bigg\}
\\& \hskip.3in -(a \sinh x_{n-2}\!+\!b\cosh x_{n-2})(a \cosh x_{n-2}\!+\!b\sinh x_{n-2})L_{x_{n-2}}
\\&\hskip.4in +\sqrt{b^{2}-a^{2}} (a \sinh x_{n-2}+b\cosh x_{n-2})\prod_{j=1}^{n-3}\cosh x_j e_{n+1},
\\&  L_{x_{n-1}x_{n}}=0,
\\&  L_{x_{n}x_{n}}=(a \sinh x_{n-2}+b\cosh x_{n-2})^{2}\prod_{j=1}^{n-3}\cosh^{2} x_j  L\\&\hskip.1in -(a \sinh x_{n-2}\!+\!b\cosh x_{n-2})^{2}\Bigg\{ \frac{\sinh 2x_{1}}{2}\! \prod_{j=2}^{n-3} \cosh^{2}x_{j} L_{x_{1}} \\& \hskip.6in+ \frac{\sinh 2x_{2}}{2}\prod_{j=3}^{n-3}\cosh^{2}x_{j} L_{x_{2}}+\cdots +
\frac{\sinh 2x_{n-3}}{2} L_{x_{n-3}}\Bigg\}
\\& \hskip.2in -(a \sinh x_{n-2}\!+\!b\cosh x_{n-2})(a \cosh x_{n-2}\!+\!b\sinh x_{n-2}) L_{x_{n-2}}
\\&\hskip.4in +\sqrt{b^{2}-a^{2}} \,(a \sinh x_{n-2}+b\cosh x_{n-2})\prod_{j=1}^{n-3}\cosh x_j e_{n+1}
,\end{aligned}\end{equation}
and\begin{equation}\begin{aligned} \label{6.20} &\frac{\p e_{n+1}}{\p {x_{j}}}=0,\;\; j=1,\ldots,n-2,
\\& \frac{\p e_{n+1}}{\p {x_{r}}}=-\frac{\sqrt{b^{2}-a^{2}}\,\sech\, x_{1}\cdots\, \sech\, x_{{n-3}}}{a \sinh x_{n-2}+b \cosh x_{n-2}} L_{x_{r}},\;\; r=n-1,n.\end{aligned}\end{equation}
After solving system \e{6.19}-\e{6.20}, we obtain 

\begin{equation}\begin{aligned} &\hskip.0in   L(x_{1},\ldots,x_{n})=c_{1}\sinh x_{1}+\cosh x_{1}\Bigg\{c_{2}\sinh x_{2}+c_{3}\sinh x_{3}\cosh x_{2} 
\\& \label{6.21}\hskip.6in +\cdots+c_{n-2}\sinh x_{n-2}\prod_{j=2}^{n-3}\cosh x_{j}  +c_{n-1}\prod_{j=2}^{n-2}\cosh x_{j}\Bigg\}
\\& \hskip.5in +(a \sinh x_{n-2}\!+\!b \cosh x_{n-2})\Bigg(\prod_{j=1}^{n-3}\cosh x_{j}\Bigg)\times \\&\hskip1.1in \big\{c_{n} x_{n-1}+c_{n+1}x_{n}+c_{n+2}(x_{n-1}^{2}\!+\!x_{n}^{2})\big\}.\end{aligned}\end{equation} 
So, by using \e{6.17} and \e{6.21} we conclude that  $L$ is congruent to  immersion  (B).

\vskip.05in
{\it Case} (3):  {\it The metric of $L_{1}\times_{f}L_{2}$ is}  \e{6.5}. In this case, we find in the same way as case (1) that the warping function is given by \e{6.8}. Thus, without loss of generality, we may assume that the metric tensor is given by

\begin{equation}\begin{aligned}\label{6.22} &g_{3}=dx_1^2+\cosh^2 x_1 dx_2^2+\cdots+\cosh^{2} x_{1}\cdots \cosh^{2} x_{n-3}dx_{n-2}^2
\\  &\hskip.1in +(a \sinh x_{n-2}+b\cosh x_{n-2})^{2}\prod_{j=1}^{n-3}\cosh^{2} x_j (dx_{n-1}^{2}+\cosh^{2}x_{n-1}dx^{2}_{n}).\end{aligned}\end{equation}
Now, by applying a similar argument as case (1), we get
\begin{align}\label{6.23}\lambda= \frac{\sqrt{b^{2}-a^{2}-1}\, }{a \sinh x_{n-2}+b \cosh x_{n-2}}\prod_{j=1}^{n-3}\sech\, x_{j},\;\; b^{2}>1+a^{2}.\end{align}
 Thus, we may derive the following system for $L: M^{n}\to H^{n+1}(-1)\subset \mathbb E^{n+2}_{1}$.
\begin{equation}\begin{aligned}\label{6.24} &L_{x_{i}x_{j}}=L,\;\; 
L_{x_{1}x_{j}}=\tanh x_1 L_j,\;\; j=2,\ldots,n-2,
\\&  L_{x_{2}x_{2}}=-\sinh x_1\cosh x_{1} L_{x_1}+\cosh^2 x_1 L,
\\&  L_{x_{k}x_{k}}=\prod_{j=1}^{k-1}\cosh^{2} x_{j}L-\sum_{j=1}^{k-1}\frac{\sinh 2x_j}{2}\! \prod_{i=j+1}^{k-1}\! \cosh^{2} x_{i}L_{j}, \; k=3,\ldots,n\!-\!2, 
\\&L_{x_{s}x_{r}}=\tanh x_{s} L_{x_{r}},\;\; s=1,\ldots,n-3, \;\; r=n-1,n,
\\&L_{x_{n-2}x_{r}} =\frac{a \cosh x_{n-2}+b\sinh x_{n-2}}{a \sinh x_{n-2}+b\cosh x_{n-2}} L_{x_{r}},
\;\; r=n-1,n, 
\\&  L_{x_{n-1}x_{n-1}}= (a \sinh x_{n-2}\!+\! b\cosh x_{n-2})^{2}\Bigg\{\! \prod_{j=1}^{n-3}\cosh^{2} x_j  L \!-\! \frac{\sinh 2x_{1}}{2}\! \prod_{j=2}^{n-3}\!\cosh^{2}x_{j}  L_{x_{1}}\\&\hskip.4in- \frac{\sinh 2x_{2}}{2} \prod_{j=3}^{n-3}\cosh^{2}x_{j} L_{x_{2}}-\cdots -
\frac{\sinh 2x_{n-3}}{2} L_{x_{n-3}}\Bigg\}
\\& \hskip.6in -(a \sinh x_{n-2}\!+\!b\cosh x_{n-2})(a \cosh x_{n-2}\!+\!b\sinh x_{n-2})L_{x_{n-2}}
\\&\hskip.4in +\sqrt{b^{2}-a^{2}-1} (a \sinh x_{n-2}+b\cosh x_{n-2})\prod_{j=1}^{n-3}\cosh x_j e_{n+1},
\end{aligned}\end{equation}\begin{equation}\begin{aligned}\notag
&  L_{x_{n-1}x_{n}}=\tanh x_{n-1}L_{x_{n}},
\\&  L_{x_{n}x_{n}}=(a \sinh x_{n-2}+b\cosh x_{n-2})^{2}\cosh^{2} x_{n-1}\Bigg\{\prod_{j=1}^{n-3}\cosh^{2} x_j  L\\&\hskip.1in - \frac{\sinh 2x_{1}}{2}\! \prod_{j=2}^{n-3} \cosh^{2}x_{j} L_{x_{1}}\!-\! \frac{\sinh 2x_{2}}{2}\prod_{j=3}^{n-3}\cosh^{2}x_{j} L_{x_{2}}\!-\cdots -\!
\frac{\sinh 2x_{n-3}}{2} L_{x_{n-3}}\!\Bigg\}
\\& \hskip.2in -(a \sinh x_{n-2}\!+\!b\cosh x_{n-2})(a \cosh x_{n-2}\!+\!b\sinh x_{n-2})\cosh^{2} x_{n-1} L_{x_{n-2}}\\& \hskip1.4in -\sinh  x_{n-1}\cosh x_{n-1}L_{x_{n-1}}
\\&\hskip.2in +\sqrt{b^{2}-a^{2}-1} \,(a \sinh x_{n-2}+b\cosh x_{n-2})\cosh^{2}x_{n-1}\prod_{j=1}^{n-3}\cosh x_j e_{n+1},
\end{aligned}\end{equation}
and
\begin{equation}\begin{aligned} \label{6.25} &\frac{\p e_{n+1}}{\p {x_{j}}}=0,\;\; j=1,\ldots,n-2,
\\& \frac{\p e_{n+1}}{\p {x_{r}}}=-\frac{\sqrt{b^{2}-a^{2}-1}\,\sech\, x_{1}\cdots\, \sech\, x_{{n-3}}}{a \sinh x_{n-2}+b \cosh x_{n-2}} L_{x_{r}},\;\; r=n-1,n.\end{aligned}\end{equation}
After solving system \e{6.24}-\e{6.25}, we get 
\begin{equation}\begin{aligned}&\hskip.0in   L(x_{1},\ldots,x_{n})=c_{1}\sinh x_{1}+\cosh x_{1}\Big\{
c_{2}\sinh x_{2}+c_{3}\sinh x_{3}\cosh x_{2} 
\\& \label{6.26}\hskip.7in +\cdots+c_{n-2}\sinh x_{n-2}\prod_{j=2}^{n-3}\cosh x_{j}  +c_{n-1}\prod_{j=2}^{n-2}\cosh x_{j}\Bigg\}
\\& \hskip.2in +(a \sinh x_{n-2}\!+\!b \cosh x_{n-2})\Bigg(\prod_{j=1}^{n-3} \!\cosh x_{j}\Bigg)\big\{c_{n}\sinh x_{n-1} \\& \hskip.5in +c_{n+1} \cosh x_{n-1}\sinh x_{n}\!+\!c_{n+2} \cosh x_{n-1} \cosh x_{n}\big\}.
\end{aligned}\end{equation}
 Consequently, we conclude  from \e{6.22} and \e{6.26} that  $L$ is congruent to  immersion  (C).
\vskip.05in

If $M^{n}$ is non-minimal and ideal in $H^{m}(-1)$ with type number $\leq 2$, then it follows again from Theorem A, Codazzi's equation and Erbarcher's reduction theorem that  $M^{n}$ is immersed in a totally geodesic  $H^{n+1}(-1)\subset H^{m}(-1)$ for both cases (2) and (3) as well. Therefore, we obtain the same conclusion as above.
\end{proof}

\begin{remark} Minimal ideal hypersurfaces of $S^{n+1}(1)$ with type number $\leq 2$ are either totally geodesic in $S^{n+1}(1)$ or they are  given by case (2) of \cite[Theorem 2]{CY}. Similarly, minimal ideal hypersurfaces of $H^{n+1}(-1)$ with type number $\leq 2$ are either totally geodesic in $H^{n+1}(-1)$ or they are given by case (3) of \cite[Theorem 3]{CY} (cf. \cite[page 423]{book}). Moreover, by using Codazzi's equation, it is easy to verify that each minimal ideal submanifold with type number $\leq 2$ in $S^{m}(1)$ (respectively, in $H^{m}(-1)$) is contained in a totally geodesic $S^{n+1}(1)\subset S^{m}(1)$ (respectively,  in a totally geodesic $H^{n+1}(-1)\subset H^{m}(-1)$).
\end{remark}

\end{document}